\begin{document}

\title{Restricted boundedness of translation operators	on variable Lebesgue spaces}
\author{Douadi Drihem}
% Use \authorrunning{Short Title} for an abbreviated version of
% your contribution title if the original one is too long
\institute {M'sila University, Department of Mathematics, Laboratory	of Functional Analysis  and Geometry of Spaces \email{douadidr@yahoo.fr, douadi.drihem@univ-msila.dz}}

% Use the package "url.sty" to avoid
% problems with special characters
% used in your e-mail or web address

\maketitle

\abstract{In this paper, we investigate the inequality 
	\begin{equation*}
	\left\Vert f(\cdot +h)\right\Vert _{p\left( \cdot \right) }\leq A\left\Vert
	f\right\Vert _{p\left( \cdot \right) },\quad h\in \mathbb{R}^{n}, A>0
	\end{equation*}%
	under some suitable assumptions on the function $f$ and  the variable exponent $p$.}
\keywords{ translation operator, maximal function,
	variable exponent.}

\section{Introduction}

Function spaces with variable exponents have been intensively studied in the
recent years by a significant number of authors. The motivation for the
increasing interest in such spaces comes not only from theoretical purposes,
but also from applications to fluid dynamics \cite{Ru00}, image restoration 
\cite{CLR06} and PDEs with non-standard growth conditions. Some example of
these spaces can be mentioned such as: variable Lebesgue space, variable
Besov and Triebel-Lizorkin spaces. We only refer to the papers [1, 5-8]
and to the monograph \cite{DHHR11} for further details and references on
recent developments on this field.

The purpose of the present paper is to study the translation operators 
$\tau_{h}:f\mapsto f(\cdot +h)$, $h\in \mathbb{R}^{n}$ in the framework of
variable Lebesgue spaces $L^{p(\cdot )}(\mathbb{R}^{n})$. Their behavior is well known if $%
p$ is constant. In general $\tau _{h}$ maps $L^{p(\cdot )}(\mathbb{R}^{n})$
to $L^{p(\cdot )}(\mathbb{R}^{n})$ for any $\lambda >0 $ if and only
if $p$ is constant, see \cite{DHHR11}, Proposition 3.6.1. Allowing $p$ to
vary from point to point will raise extra difficulties which, in general,
are overcome by imposing some regularity assumptions on this exponent. By
these additional assumptions we ensure the boundedness of these operators on
variable Lebesgue spaces but with some appropriate assumptions. Before to state the main result, we fix some notation and recall some basics facts on variable Lebesgue spaces.
We denote by $B(x,r)$ the open ball in $\mathbb{R}^{n}$ with center $x$ and
radius $r$. By supp $f$ we denote the support of the function $f$ , i.e.,
the closure of its non-zero set.
By $\mathcal{S}(\mathbb{R}^{n})$ we denote the Schwartz space of all
complex-valued, infinitely differentiable and rapidly decreasing functions
on $\mathbb{R}^{n}$ and by $\mathcal{S}^{\prime }(\mathbb{R}^{n})$ the dual
space of all tempered distributions on $\mathbb{R}^{n}$. We define the
Fourier transform of a function $f\in \mathcal{S}(\mathbb{R}^{n})$\ by
\begin{equation*}
\mathcal{F}(f)(\xi ):=\left( 2\pi \right) ^{-n/2}\int_{\mathbb{R}	^{n}}e^{-ix\cdot \xi }f(x)dx.
\end{equation*}

The variable exponents that we consider are always measurable functions on $%
\mathbb{R}^{n}$ with range in $\left[ c,\infty \right) $ for some $c>0$. We
denote the set of such functions by $\mathcal{P}_{0}(\mathbb{R}^{n})$. The
subset of variable exponents with range $\left[ 1,\infty \right) $ is
denoted by $\mathcal{P}(\mathbb{R}^{n})$. We use the standard notation $%
p^{-}:=\underset{ x\in \mathbb{R}^{n}}{\text{ess-inf}}$ $p(x)$ and $p^{+}:=%
\underset{x\in \mathbb{R}^{n}}{\text{ess-sup }}p(x)$. Everywhere below we
shall consider bounded exponents.

The variable exponent Lebesgue space $L^{{p(\cdot )}}(\mathbb{R}^{n})$ is
the class of all measurable functions $f$ on ${\mathbb{R}^{n}}$ such that
the modular $\varrho _{{p(\cdot )}}(f):=\int_{\mathbb{R}^{n}}|f(x)|^{p(x)}%
\,dx$ is finite. This is a quasi-Banach function space equipped with the
quasi-norm $\Vert f\Vert _{p(\cdot )}:=\inf \big\{\mu >0:\varrho _{{p(\cdot )%
}}\big(\frac{f}{\mu }\big)\leq 1\big\}$. If $p(x):=p$ is constant, then $L^{{%
		\ p(\cdot )}}(\mathbb{R}^{n})=L^{p}(\mathbb{R}^{n})$ is the classical
Lebesgue space.

An useful property is that $\varrho _{{p(\cdot )}}(f)\leqslant 1$ if and
only if $\Vert f\Vert _{{p(\cdot )}}\leqslant 1$ (\textit{unit ball property}%
), which is clear for constant exponents since the relation between the norm
and the modular is obvious in that case. As is known, the following
inequalities hold%
\begin{equation}
\min\left (\varrho _{p(\cdot )}(f)^{1/p^{-}},\varrho _{p(\cdot
	)}(f)^{1/p^{+}}\right)\leq \left\Vert f\right\Vert _{p\left( \cdot \right) }\leq
\max\left (\varrho _{p(\cdot )}(f)^{1/p^{-}},\varrho _{p(\cdot )}(f)^{1/p^{+}}\right).
\label{sem-m-properties2}
\end{equation}

We say that a function $g\,:\,{\mathbb{R}^{n}}\rightarrow \mathbb{R}$ is 
\emph{locally $\log $-H\"{o}lder continuous}, if there exists a constant $%
c_{\log }>0$ such that $|g(x)-g(y)|\leq \frac{c_{\log }}{\ln (e+1/|x-y|)}$
for all $x,y\in \mathbb{R}^{n}$. If, for some $g_{\infty }\in \mathbb{R}$
and $c_{\log }>0$, there holds $|g(x)-g_{\infty }|\leq \frac{c_{\log }}{\ln
	(e+|x|)}$ for all $x\in \mathbb{R}^{n}$, then we say that $g$ satisfies the 
\textit{$\log $-H\"{o}lder decay condition} (at infinity). Note that every
function with log-decay condition is bounded.

The notation $\mathcal{P}^{\mathrm{log}}(\mathbb{R}^{n})$ is used for all
those exponents $p\in \mathcal{P}(\mathbb{R}^{n})$ which satisfy the local $%
\log $-H\"{o}lder continuity condition and the $\log $-H\"{o}lder decay
condition, where we consider $p_{\infty }:=\lim_{|x|\rightarrow \infty }p(x)$%
. The class $\mathcal{P}_{0}^{\mathrm{log}}(\mathbb{R}^{n})$ is defined
analogously.
By $c$ we denote generic positive constants, which may have different values
at different occurrences.
We refer to the recent monograph \cite{DHHR11} and the paper \cite{KR91} for further details, and historical remarks and references on variable exponent spaces.

In this paper we shall show the following result:
\begin{theorem}
	\label{key-estimate1}Let $p\in \mathcal{P}^{\mathrm{log}}(\mathbb{R}^{n})$
	\ with $1<p^{-}\leq p^{+}<\infty $, $h\in \mathbb{R}^{n}$\ and $k\in \mathbb{%
		\ N}$. \textit{Then for all }$f\in L^{p(\cdot )}(\mathbb{R}^{n})$\textit{\
		with }$\mathrm{supp}$\textit{\ }$\mathcal{F}f\subset \{\xi \in \mathbb{R}%
	^{n}:\left\vert \xi \right\vert \leq 2^{v+1}\},v\in \mathbb{N}_{0}$\textit{,
		we have} 
	\begin{equation*}
	\left\Vert \tau _{h}f\right\Vert _{p\left( \cdot \right) }\leq c\text{ }\exp
	\Big((2+2^{vnk}\left\vert h\right\vert ^{k})c_{\log }\left( 1/p\right)
	\Big)\left\Vert f\right\Vert _{p\left( \cdot \right) },
	\end{equation*}%
	where $c>0$ is independent of $h$, $v$ and $k$.
\end{theorem}
We mention that the boundedness of these operators in function spaces play
an important role in mathematical analysis. They appear in the localizations of Besov spaces  \cite{Naibo}, where  the author used the boundedness of these operators in Besov spaces which based on the Lebesgue spaces.
\section{Auxiliary results}
In this section we present some results which are useful for us. The next
lemma often allows us to deal with exponents which are smaller than $1$, see 
\cite[Lemma A.6]{DHR09}. Recall that $\eta _{v,m}\left( x\right) :=2^{nv}\left( 1+2^{v}\left\vert
x\right\vert \right) ^{-m}$, for any $x\in \mathbb{R}^{n}$, $v\in \mathbb{Z}$
and $m>0$. Note that $\eta _{v,m}\in L^{1}(\mathbb{R}^{n})$ when $m>n$ and
that $\left\Vert \eta _{v,m}\right\Vert _{1}=c_{m}$ is independent of $v$.

\begin{lemma}
	\label{r-trick}Let $r>0$, $v\in \mathbb{N}_{0}$ and $m>n$. Then there exists 
	$c=c(r,m,n)>0$ such that for all $g\in \mathcal{S}^{\prime }(\mathbb{R}^{n})$
	with $\mathrm{supp}$ $\mathcal{F}g\subset \{\xi \in \mathbb{R}
	^{n}:\left\vert \xi \right\vert \leq 2^{v+1}\}$, we have 
	\begin{equation*}
	\left\vert g(x)\right\vert \leq c(\eta _{v,m}\ast |g|^{r}(x))^{1/r},\quad
	x\in \mathbb{R}^{n}.
	\end{equation*}
\end{lemma}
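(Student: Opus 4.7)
My strategy is a dilation reduction to scale $v=0$ followed by a Peetre maximal function bootstrap, exploiting the Paley--Wiener representation of band-limited distributions.

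\emph{Step 1 (dilation).} Setting $g_v(x):=g(2^{-v}x)$, a direct computation gives $\mathcal{F}g_v(\xi)=2^{vn}(\mathcal{F}g)(2^v\xi)$, so $\supp \mathcal{F}g_v\subset \overline{B(0,2)}$, and the substitution $y=2^{-v}y'$ yields
\[ (\eta_{v,m}\ast|g|^r)(x)=(\eta_{0,m}\ast|g_v|^r)(2^v x). \]
It therefore suffices to establish the inequality when $v=0$.

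\emph{Step 2 (setup via Paley--Wiener).} By Paley--Wiener--Schwartz, $g$ is a $C^\infty$ function of polynomial growth. Choose $\psi\in\mathcal{S}$ with $\widehat{\psi}\equiv 1$ on $\overline{B(0,2)}$, so that $g=\psi\ast g$. Introduce the Peetre maximal function
\[ g^{\ast}_{m}(x):=\sup_{y\in\mathbb{R}^n}\frac{|g(y)|}{(1+|x-y|)^{m}}. \]
Since $|g(x)|\leq g^{\ast}_m(x)$, the lemma follows from the pointwise bound $g^{\ast}_{m}(x)^r\leq c\,(\eta_{0,m}\ast|g|^r)(x)$.

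\emph{Step 3 (bootstrap estimate).} Fix $x,z\in\mathbb{R}^n$. Using $g=\psi\ast g$ together with the Schwartz decay $|\psi(z-y)|\leq C_N(1+|z-y|)^{-N}$, I split $|g(y)|=|g(y)|^{r}|g(y)|^{1-r}$ and dominate $|g(y)|^{1-r}\leq g^{\ast}_{m}(x)^{1-r}(1+|x-y|)^{m(1-r)}$. Combined with the sub-multiplicative inequality $(1+|x-y|)\leq (1+|x-z|)(1+|z-y|)$ and a choice of $N$ large enough that $N-m(1-r)\geq m$, routine manipulations give
\[ \frac{|g(z)|}{(1+|x-z|)^{m}}\leq c\,g^{\ast}_{m}(x)^{1-r}\,(\eta_{0,m}\ast|g|^r)(x). \]
Taking the supremum in $z$ yields $g^{\ast}_{m}(x)\leq c\,g^{\ast}_{m}(x)^{1-r}(\eta_{0,m}\ast|g|^r)(x)$, and dividing by $g^{\ast}_{m}(x)^{1-r}$ produces the required estimate.

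\emph{Main obstacle.} The delicate point is the final division: one must know $g^{\ast}_{m}(x)<\infty$ a priori in order to absorb it, yet Paley--Wiener only guarantees polynomial growth of $g$, so the finiteness is not automatic for small $m>n$. The standard remedy is to first run the bootstrap with an auxiliary exponent $m'$ large enough that $g^{\ast}_{m'}(x)<\infty$ is trivial, and then upgrade to arbitrary $m>n$ via a truncation-and-limit argument (for instance, apply the inequality to $g\chi_{B(0,k)}\ast\varphi_{1/k}$ and invoke Fatou's lemma). Once this absorption is legitimized, everything else reduces to bookkeeping with Schwartz decay and Peetre's sub-multiplicative inequality.
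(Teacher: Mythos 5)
The paper itself gives no proof of this lemma --- it is quoted verbatim from \cite[Lemma A.7]{DHR09} --- and the standard proof behind that reference is exactly the dilation-plus-Peetre-maximal-function scheme you outline, so your strategy is the right one. However, Step 3 contains a genuine error in the calibration of the Peetre parameter. With $g^{*}_{m}$ as you define it, the manipulations you describe do not produce the displayed inequality: after splitting $|g(y)|=|g(y)|^{r}|g(y)|^{1-r}$, bounding $|g(y)|^{1-r}\le g^{*}_{m}(x)^{1-r}(1+|x-y|)^{m(1-r)}$, and using $(1+|x-y|)\le(1+|x-z|)(1+|z-y|)$ together with $N-m(1-r)\ge m$, one arrives at
$|g(z)|\le c\,g^{*}_{m}(x)^{1-r}(1+|x-z|)^{m(1-r)+m}\,(\eta_{0,m}\ast|g|^{r})(x)$,
so that dividing by $(1+|x-z|)^{m}$ leaves an uncontrolled factor $(1+|x-z|)^{m(1-r)}$. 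For $0<r<1$ this factor is unbounded in $z$, the supremum over $z$ is $+\infty$ on the right-hand side, and the bootstrap collapses. The standard repair is to run the argument with the maximal function $g^{*}_{m/r}(x)=\sup_{z}|g(z)|(1+|x-z|)^{-m/r}$: with $a=m/r$ the leftover exponent $a(1-r)+m-a$ vanishes, the supremum over $z$ gives $g^{*}_{m/r}(x)\le c\,g^{*}_{m/r}(x)^{1-r}(\eta_{0,m}\ast|g|^{r})(x)$, and since $|g(x)|\le g^{*}_{m/r}(x)$ the lemma follows after division.

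Two further points. First, the bound $|g(y)|^{1-r}\le g^{*}_{a}(x)^{1-r}(1+|x-y|)^{a(1-r)}$ is only valid for $r\le 1$ (for $r>1$ the exponent $1-r$ is negative and the inequality reverses); the case $r\ge 1$ must be dispatched separately, most simply by reducing to $r=1$ via H\"older's inequality, using that $\|\eta_{v,m}\|_{1}<\infty$ for $m>n$. Second, you correctly flag the need to know $g^{*}_{a}(x)<\infty$ before absorbing it; the remedy you sketch --- assume without loss of generality that $(\eta_{0,m}\ast|g|^{r})(x)<\infty$, first run the estimate with a large auxiliary parameter for which finiteness follows from the polynomial growth guaranteed by Paley--Wiener--Schwartz, then descend to $a=m/r$ --- is the standard one and can be made rigorous. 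With the parameter recalibrated and the case $r\ge1$ treated separately, your proof is correct and coincides with the argument underlying the cited reference.
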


We will make use of the following statement.

\begin{theorem}
	\label{DHHR-estimate}Let $p\in \mathcal{P}^{\mathrm{log}}(\mathbb{R}^{n})$, $%
	\theta :=2+\frac{\left\vert h\right\vert ^{k}}{|Q|^{k}}$, $k\in \mathbb{N}$, 
	$M:=\exp \frac{\theta c_{\log }\left( 1/p\right) }{p^{-}}$, if $|Q|<\min
	(\left\vert h\right\vert ,1)$ and $M:=1$, otherwise. Then for every $m>0$
	there exists   \ $\gamma =\exp \left( -4mc_{\mathrm{log}}\left( 1/p\right)
	\right) $ such that 
	$\Big(\frac{\gamma }{\left\vert Q\right\vert }\int_{Q}\left\vert \tau _{h}f(y)\right\vert dy\Big)^{p\left( x\right) }$ is	bounded by 
	\begin{equation*}
	\frac{cM}{\left\vert Q\right\vert }\int_{Q}\left\vert \tau _{h}f(y)\right\vert ^{p\left( y+h\right) }dy+cB\left(\left(
	e+\left\vert x\right\vert \right) ^{-m}+\frac{1}{\left\vert Q\right\vert }
	\int_{Q}\left( e+\left\vert y+h\right\vert \right) ^{-m}dy\right)
	\end{equation*}
 	for every cube $\mathrm{(}$or ball$\mathrm{)}$ $Q\subset \mathbb{R}^{n}$,
	all $x\in Q,$ $h\in \mathbb{R}^{n}$and all $f\in L^{p\left( \cdot \right) }( 
	\mathbb{R}^{n})+L^{\infty }(\mathbb{R}^{n})$\ with $\left\Vert f\right\Vert
	_{p\left( \cdot \right) }+\left\Vert f\right\Vert _{\infty }\leq 1$, where $B= \min %
	\big(1,\left\vert Q\right\vert ^{\frac{m}{\theta }}\big)$ and $c>0$ is independent of $h$, $k$ and $x$ .
\end{theorem}
\begin{proof}
	Our estimate use partially some decomposition techniques already used in  \cite[Theorem 4.2.4]{DHHR11}. Let $p\in 
	\mathcal{\ P}^{\mathrm{log}}(\mathbb{R}^{n})$ with $1\leq p^{-}\leq
	p^{+}<\infty $ and $p_{Q+h}^{-}=\underset{z\in Q}{\text{ess-inf}}$ $p(z+h)$.
	Define $q\in \mathcal{P}^{\mathrm{log}}(\mathbb{R}^{n}\times \mathbb{R}	^{n}\times \mathbb{\ R}^{n})$ by $ \frac{1}{q(x,y,h)}=\max\left(\frac{1}{p(x)}-\frac{1}{p(y+h)},0\right)$.  Then
	\begin{eqnarray*}
		&&\left( \frac{\gamma }{|Q|}\int_{Q}\left\vert f(y+h)\right\vert dy\right)
		^{p\left( x\right) } \\
		&\leq &\frac{M}{|Q|}\int_{Q}\left\vert f(y+h)\right\vert ^{p\left(
			y+h\right) }dy+\frac{1}{|Q|}\int_{Q}\gamma ^{q(x,y,h)}dy
	\end{eqnarray*}%
	for every cube $Q\subset \mathbb{R}^{n}$, all $x\in Q,$ $h\in \mathbb{R}^{n}$%
	and all $f\in L^{p\left( \cdot \right) }(\mathbb{R}^{n})+L^{\infty }(\mathbb{%
		\ R}^{n})$\ with $\left\Vert f\right\Vert _{p\left( \cdot \right)
	}+\left\Vert f\right\Vert _{\infty }\leq 1$. Indeed, we split $f\left(
	y+h\right) $ into three parts
	
	\begin{equation*}
	\begin{array}{ccc}
	f_{1}(y+h) & = & f(y+h)\chi _{\{y:\left\vert f(y+h)\right\vert >1\}}(y), \\ 
	f_{2}(y+h) & = & f(y+h)\chi _{\{y:\left\vert f(y+h)\right\vert \leq
		1,p(y+h)\leq p(x)\}}(y), \\ 
	f_{3}(y+h) & = & f(y+h)\chi _{\{y:\left\vert f(y+h)\right\vert \leq
		1,p(y+h)>p(x)\}}(y).%
	\end{array}%
	\end{equation*}%
	By convexity of $t\mapsto t^{p}$, 
	\begin{eqnarray*}
		\left( \frac{\gamma }{|Q|}\int_{Q}\left\vert f(y+h)\right\vert dy\right)
		^{p\left( x\right) } &\leq &3^{p^{+}-1}\sum\limits_{i=1}^{3}\left( \frac{
			\gamma }{|Q|}\int_{Q}\left\vert f_{i}(y+h)\right\vert dy\right) ^{p\left(
			x\right) } \\
		&=&3^{p^{+}-1}\left( I_{1}+I_{2}+I_{3}\right) .
	\end{eqnarray*}%
    
    	\textbf{Estimation of }$I_{1}$\textbf{.} We divide the estimation in three cases.
	
	\textbf{Case 1.} $p(x)\leq p_{Q+h}^{-}$. By Jensen's inequality,%
	\begin{equation*}
	I_{1}\leq \gamma ^{p\left( x\right) }\frac{1}{|Q|}\int_{Q}\left\vert
	f_{1}(y+h)\right\vert ^{p\left( x\right) }dy=I.
	\end{equation*}%
	Since $|f_{1}(y+h)|>1$, we have $\left\vert f_{1}(y+h)\right\vert
	^{p(x)}\leq \left\vert f_{1}(y+h)\right\vert ^{p_{Q+h}^{-}}\leq \left\vert
	f_{1}(y+h)\right\vert ^{p(y+h)}$ and thus
	 	\begin{equation*}I\leq \frac{1}{|Q|}	\int_{Q}\left\vert f(y+h)\right\vert ^{p(y+h)}dy.
	 		\end{equation*}
	  If $\left\Vert
f\right\Vert _{\infty }\leq 1$, then $f_{1}(y+h)=0$\ and $I=0$.
	
	\textbf{Case 2.} $p(x)>p_{Q+h}^{-}\geq p_{Q}^{-}$. Again Jensen's inequality implies that%
	\begin{eqnarray*}
		I_{1} &\leq &\left( \frac{\gamma }{|Q|}\int_{Q}\left\vert
		f_{1}(y+h)\right\vert ^{p_{Q}^{-}}dy\right) ^{\frac{p\left( x\right) }{
				p_{Q}^{-}}} \\
		&\leq &\left( \frac{\gamma }{|Q|}\int_{Q}\left\vert f(y+h)\right\vert
		^{p\left( y+h\right) }dy\right) ^{\frac{p\left( x\right) }{p_{Q}^{-}}
			-1}\left( \frac{\gamma }{|Q|}\int_{Q}\left\vert f(y+h)\right\vert ^{p\left(
			y+h\right) }dy\right) \\
		&\leq &c\frac{\gamma }{|Q|}\int_{Q}\left\vert f(y+h)\right\vert ^{p\left(
			y+h\right) }dy,
	\end{eqnarray*}%
	by the fact that $\int_{Q}\left\vert f(y+h)\right\vert ^{p\left( y+h\right)
	}dy\leq 1$ and $\left( \frac{1}{|Q|}\right) ^{\frac{p\left( x\right) }{
			p_{Q}^{-}}-1}\leq c$, which follow from\ $p\in \mathcal{P}^{\mathrm{log}}(%
	\mathbb{R}^{n})$, with $c>0$ independent of $x,h$ and $|Q|$.
	
	\textbf{Case 3.} $p(x)\geq p_{Q}^{-}>p_{Q+h}^{-}$. We have%
	\begin{eqnarray*}
		I_{1} &\leq &\left( \frac{\gamma }{|Q|}\int_{Q}\left\vert
		f_{1}(y+h)\right\vert ^{p_{Q+h}^{-}}dy\right) ^{\frac{p\left( x\right) }{
				p_{Q+h}^{-}}} \\
		&\leq &\left( \frac{\gamma }{|Q|}\int_{Q}\left\vert f(y+h)\right\vert
		^{p\left( y+h\right) }dy\right) \left( \frac{\gamma }{|Q|}\int_{Q}\left\vert
		f(y+h)\right\vert ^{p\left( y+h\right) }dy\right) ^{\frac{p\left( x\right) }{
				p_{Q+h}^{-}}-1} \\
		&\leq &\frac{1}{|Q|}\int_{Q}\left\vert f(y+h)\right\vert ^{p\left(
			y+h\right) }dy\left( \frac{1}{|Q|}\right) ^{\frac{p\left( x\right) }{
				p_{Q+h}^{-}}-1}.
	\end{eqnarray*}%
	If $|Q|\geq 1$, then the second term is bounded by $1$. Now we suppose that $%
	|Q|<1$. We use the local log-H\"{o}lder condition:%
	\begin{equation*}
	\left( \frac{1}{|Q|}\right) ^{\frac{p\left( x\right) }{p_{Q+h}^{-}}
		-1}=\left( \frac{1}{|Q|}\right) ^{\frac{p\left( x\right) -p_{Q}^{-}}{
			p_{Q+h}^{-}}}\left( \frac{1}{|Q|}\right) ^{\frac{p_{Q}^{-}-p_{Q+h}^{-}}{
			p_{Q+h}^{-}}}\leq c\left( \frac{1}{|Q|}\right) ^{\frac{p_{Q}^{-}-p_{Q+h}^{-} 
		}{p_{Q+h}^{-}}}.
	\end{equation*}%
	Let $p\left( x_{0}\right) =p_{Q}^{-}$ and $p\left( y_{0}+h\right)
	=p_{Q+h}^{-}$ with $x_{0},y_{0}\in Q$. Since $p\in \mathcal{P}^{\mathrm{log}%
	}(\mathbb{R}^{n})$, we have%
	\begin{equation*}
	\left( \frac{1}{|Q|}\right) ^{\frac{p_{Q}^{-}-p_{Q+h}^{-}}{p_{Q+h}^{-}}%
	}=\left( \frac{1}{|Q|}\right) ^{\frac{p\left( x_{0}\right) -p\left(
			y_{0}\right) }{p_{Q+h}^{-}}}\left( \frac{1}{|Q|}\right) ^{\frac{p\left(
			y_{0}\right) -p\left( y_{0}+h\right) }{p_{Q+h}^{-}}}\leq c\left( \frac{1}{|Q|%
	}\right) ^{\frac{p\left( y_{0}\right) -p\left( y_{0}+h\right) }{p_{Q+h}^{-}}%
	}.
	\end{equation*}%
	We see that%
	\begin{equation*}
	\left\vert p\left( y_{0}\right) -p\left( y_{0}+h\right) \right\vert \leq
	\sum\limits_{i=0}^{N-1}\Big|p\big(y_{0}+\frac{i}{N}h\big)-p\big(y_{0}+\frac{i+1}{N}h\big)%
	\Big|,
	\end{equation*}%
	where%
	\begin{equation*}
	N:=\left\{ 
	\begin{array}{ccc}
	\left[ \frac{\left\vert h\right\vert ^{k}}{|Q|^{k}}\right] +1, &  & 
	\left\vert h\right\vert >|Q|, \\ 
	1, & \text{otherwise.} & 
	\end{array}%
	\right.
	\end{equation*}%
	Therefore,%
	\begin{equation*}
	\left( \frac{1}{|Q|}\right) ^{\frac{p\left( y_{0}\right) -p\left(
			y_{0}+h\right) }{p_{Q+h}^{-}}}\leq \prod\limits_{i=0}^{N-1}\left( \frac{1}{
		|Q|}\right) ^{\frac{p(y_{0}+\frac{i}{N}h)-p(y_{0}+\frac{i+1}{N}h)}{
			p_{Q+h}^{-}}}\leq c\exp (Nc_{\log }\left( p\right) /p^{-}),
	\end{equation*}%
	where $c>0$ independent of $y_{0},h$, $N$ and $|Q|$, since%
	\begin{equation*}
	\Big|p\big(y_{0}+\frac{i}{N}h\big)-p\big(y_{0}+\frac{i+1}{N}h\big)\Big|\leq \frac{c_{\log
		}\left( 1/p\right) }{\log \Big(e+\frac{N}{\left\vert h\right\vert }\Big)}%
	\leq \frac{c_{\log }\left( 1/p\right) }{\log \Big(e+\frac{1}{\left\vert
			Q\right\vert }\Big)}
	\end{equation*}%
	if $\left\vert h\right\vert >|Q|$.
	
	\textbf{Estimation of}\textit{\ }$I_{2}$\textbf{.} By Jensen's inequality,%
	\begin{equation*}
	I_{2}\leq \gamma ^{p\left( x\right) }\frac{1}{|Q|}\int_{Q}\left\vert
	f_{2}(y+h)\right\vert ^{p\left( x\right) }dy=J.
	\end{equation*}%
	Since $|f_{2}(y+h)|\leq 1$ we have $\left\vert f_{2}(y+h)\right\vert
	^{p(x)}\leq \left\vert f_{2}(y+h)\right\vert ^{p(y+h)}$ and thus 
	\begin{equation*}
	J\leq \frac{1}{|Q|}\int_{Q}\left\vert f(y+h)\right\vert ^{p(y+h)}dy.
	\end{equation*}%
	
    	\textbf{Estimation of }$I_{3}$\textbf{.} Again by Jensen's inequality,%
	\begin{eqnarray*}
		&&\left( \frac{\gamma }{|Q|}\int_{Q}\left\vert f_{3}(y+h)\right\vert
		dy\right) ^{p\left( x\right) } \\
		&\leq &\frac{1}{|Q|}\int_{Q}\left( \left\vert \gamma f(y+h)\right\vert
		\right) ^{p\left( x\right) }\chi _{\{\left\vert f(y+h)\right\vert \leq
			1,p(y+h)>p(x)\}}(y)dy.
	\end{eqnarray*}%
	Now, Young's inequality give that the last term is bounded by%
	\begin{equation*}
	\frac{1}{|Q|}\int_{Q}\left( \left\vert f(y+h)\right\vert ^{p(y+h)}+\gamma
	^{q(x,y,h)}\right) dy.
	\end{equation*}%
	Observe that 
	\begin{equation*}
	\frac{1}{q(x,y,h)}=\max \Big(\frac{1}{p(x)}-\frac{1}{p(y+h)},0\Big)\leq \frac{1}{s(x)%
	}+\frac{1}{s(y+h)},
	\end{equation*}%
	where $\frac{1}{s(\cdot )}=\Big|\frac{1}{p(\cdot )}-\frac{1}{p_{\infty }}%
	\Big|$. We have 
	\begin{equation*}
	\gamma ^{q(x,y,h)}=\gamma ^{q(x,y,h)/2}\gamma ^{q(x,y,h)/2}\leq \gamma
	^{q(x,y,h)/2}\left( \gamma ^{s(x)/4}+\gamma ^{s(y+h)/4}\right) .
	\end{equation*}%
	We suppose that $\left\vert Q\right\vert <1$. Then%
	\begin{eqnarray*}
		\Big|\frac{1}{q(x,y,h)}\Big| &\leq &\Big|\frac{1}{p(x)}-\frac{1}{p(y)}\Big|+%
		\Big|\frac{1}{p(y)}-\frac{1}{p(y+h)}\Big| \\
		&\leq &\frac{c_{\log }\left( 1/p\right) }{-\log \left\vert Q\right\vert }%
		+\sum\limits_{i=0}^{N-1}\Big|\frac{1}{p(y+\frac{i}{N}h)}-\frac{1}{p(y+\frac{%
				i+1}{N}h)}\Big|.
	\end{eqnarray*}%
	Therefore,%
	\begin{eqnarray*}
		\Big|\frac{1}{q(x,y,h)}\Big| &\leq &\frac{c_{\log }\left( 1/p\right) }{-\log
			\left\vert Q\right\vert }+\sum\limits_{i=0}^{N-1}\frac{c_{\log }\left(
			1/p\right) }{\log \Big(e+\frac{N}{\left\vert h\right\vert }\Big)}\leq \frac{%
			c_{\log }\left( 1/p\right) }{-\log \left\vert Q\right\vert }\left( 1+N\right)
		\\
		&\leq &\frac{c_{\log }\left( 1/p\right) }{-\log \left\vert Q\right\vert }%
		\Big(2+\frac{\left\vert h\right\vert ^{k}}{|Q|^{k}}\Big).
	\end{eqnarray*}%
	Hence, $\gamma ^{q(x,y,h)/2}=\gamma ^{\frac{q(x,y,h)}{4}}\gamma ^{\frac{%
			q(x,y,h)}{4}}\leq \left\vert Q\right\vert ^{\frac{m}{2+\frac{\left\vert
				h\right\vert ^{k}}{|Q|^{k}}}}\gamma ^{\frac{q(x,y,h)}{4}}$. If $\left\vert
	Q\right\vert \geq 1$, then we use $\gamma ^{q(x,y,h)/2}\leq 1$ which follow
	from $\gamma <1$. Now by \cite[Proposition 4.1.8]{DHHR11}, we obtain the desired inequality. The proof is complete.
\end{proof}

\section{The proof of the main result}
In section we prove our result. Let  $p\in\mathcal{P}^{\mathrm{log}}(\mathbb{R}^{n})$ and define the translation operator by $(\tau _{h}f)(\cdot ):=f(\cdot +h)$.  We recall that the Hardy-Littlewood maximal operator $\mathcal{M}$ is defined on $L_{\mathrm{loc}}^{1}$ by
\begin{equation*}\mathcal{M}f(x):=\sup_{r>0}\frac{1}{|B(x,r)|}\int_{B(x,r)}|f(y)|dy,  \quad  x\in \mathbb{R}^{n}. \end{equation*}
Let 
\begin{equation*}M_{B(x,r)}f(x):=\frac{1}{\left\vert B(x,r)\right\vert }\int_{B(x,r)}\left%
\vert f(y)\right\vert dy,\quad r>0, x\in \mathbb{R}^{n}.\end{equation*}

\textbf{Proof of Theorem \protect\ref{key-estimate1}}.		Obviously, we assume that $\left\Vert f\right\Vert _{p\left( \cdot \right)
}\neq 0$. Since $L^{p(\cdot )}(\mathbb{R}^{n})\subset \mathcal{S}^{\prime }(%
\mathbb{R}^{n})$, Lemma \ref{r-trick} yields $\left\vert f\right\vert \leq
\eta _{v,N}\ast \left\vert f\right\vert $, for any\ $N>n,v\in \mathbb{N}_{0}$
. We write 
\begin{equation*}
\eta _{v,N}\ast |f|(x+h)=\int_{\mathbb{R}^{n}}\eta _{v,N}(x-y)\left\vert
f(y+h)\right\vert dy.
\end{equation*}%
We split the integral into two parts, one integral over the set $B(x,$ $%
2^{-v})$ and one over its complement. The first part is bounded by $M_{B(x%
	\text{, }2^{-v})}(\tau _{h}f)(x)$, and the second one is majorized by $%
c\sum_{i=0}^{\infty }2^{(n-N)i}M_{B(x\text{, }2^{1-v+i})}(\tau _{h}f)(x)$.
Consequently, where $N>n$,
\begin{equation*}
\left\Vert \tau _{h}f\right\Vert _{p\left( \cdot \right) }\leq
c\sum_{i=0}^{\infty }2^{(n-N)i}\big\|M_{B(\cdot \text{, }2^{1-v+i})}(\tau
_{h}f)\big\|_{p\left( \cdot \right) }.
\end{equation*}%
We will prove that 
\begin{equation*}
\Big\|\gamma \delta M_{B(\cdot \text{, }2^{1-v+i})}\Big(\frac{\tau _{h}f}{%
	\left\Vert f\right\Vert _{p\left( \cdot \right) }}\Big)\Big\|_{p\left( \cdot
	\right) }\leq c,\quad i,v\in \mathbb{N}_{0},
\end{equation*}%
with $c>0$ independent of $i,v$ and $h$, $\gamma =\exp\big ( -4mc_{\mathrm{%
		log}}\left( 1/p\right)\big)$ and $\delta =\exp \big(-(2+2^{vnk}\left\vert
h\right\vert ^{k})c_{\log }\left( 1/p\right) \big)$. Taking into account Theorem %
\ref{DHHR-estimate} we have, for any $i\in \mathbb{N}_{0},m>0$, 
\begin{equation}
\left(\gamma \text{ }\delta 2^{\left( v-i-1\right) n}\int_{B(x\text{, }%
	2^{1-v+i})}\frac{\big|\tau _{h}f(y)\big|}{\left\Vert f\right\Vert _{p\left(
		\cdot \right) }}dy\right)^{p\left( x\right) /p^{-}}  \label{key-exp}
\end{equation}%
we majorized it by, after a simple change of variable, 
\begin{equation*}
cM_{B(x+h\text{, }2^{1-v+i})}\Big(\left\vert g\right\vert ^{p\left( \cdot
	\right) /p^{-}}\Big)(x)+c\left( e+\left\vert x\right\vert \right)
^{-m}+cM_{B(x+h\text{, }2^{1-v+i})}\Big(\left( e+\left\vert \cdot \right\vert
\right) ^{-m}\Big)(x),
\end{equation*}%
with $g=\frac{f}{\left\Vert f\right\Vert _{p\left( \cdot \right) }}$ .
Therefore the expression $\mathrm{\eqref{key-exp}}$ is bounded by 
\begin{equation}
c\mathcal{M}\big(\left\vert g\right\vert ^{p\left( \cdot \right) /p^{-}}\big)%
(x+h)+c\left( e+\left\vert x\right\vert \right) ^{-m}+c\mathcal{M}%
(e+\left\vert \cdot \right\vert ^{-m})(x+h).  \label{key-exp2}
\end{equation}%
Obviously, 
\begin{equation*}
\varrho _{p(\cdot )}\Big(\gamma \delta M_{B(\cdot \text{, }2^{1-v+i})}\Big(%
\frac{\tau _{h}f}{\left\Vert f\right\Vert _{p\left( \cdot \right) }}\Big)%
\Big)=3^{p^{-}}\varrho _{p^{-}}\Big(\frac{1}{3}\Big(\gamma \delta M_{B(\cdot 
	\text{, }2^{1-v+i})}\Big(\frac{\tau _{h}f}{\left\Vert f\right\Vert _{p\left(
		\cdot \right) }}\Big)\Big)^{p\left( \cdot \right) /p^{-}}\Big).
\end{equation*}%
In view of $\mathrm{\eqref{key-exp2}}$, the last term can be estimated by 
\begin{equation*}
c\big\|\mathcal{M}(|g|^{p\left( \cdot \right) /p^{-}})(\cdot +h)%
\big\|_{p^{-}}^{p^{-}}+c\left\Vert \left( e+\left\vert \cdot
\right\vert \right) ^{-m}\right\Vert _{p^{-}}^{p^{-}}+c\left\Vert 
\mathcal{M}((e+\left\vert \cdot \right\vert )^{-m})(\cdot +h)\right\Vert
_{p^{-}}^{p^{-}}.
\end{equation*}%
First we see that $\left( e+\left\vert \cdot \right\vert \right) ^{-m}\in
L^{p^{-}}(\mathbb{R}^{n})$ for $m>\frac{n}{p^{-}}$. Secondly the classical
result on the continuity of $\mathcal{M}$ on $L^{p^{-}}(\mathbb{R}^{n})$
implies that 
\begin{equation*}
\big\|\mathcal{M}(|g|^{p\left( \cdot \right) /p^{-}})(\cdot +h)\big\|%
_{p^{-}}^{p^{-}}=\big\|\mathcal{M}\big(|g|^{p\left( \cdot \right) /p^{-}}%
\big)\big\|_{p^{-}}^{p^{-}}\leq c\text{ }\big\||g|^{p\left( \cdot \right)
	/p^{-}}\big\|_{p^{-}}^{p^{-}}=c\text{ }\varrho _{p(\cdot )}(g)\leq c,\text{ }
\end{equation*}%
and 
\begin{equation*}
\left\Vert \mathcal{M}((e+\left\vert \cdot \right\vert )^{-m})(\cdot
+h)\right\Vert _{p^{-}}^{p^{-}}=\left\Vert \mathcal{M}(e+\left\vert \cdot
\right\vert )^{-m}\right\Vert _{p^{-}}^{p^{-}}\leq c\text{ }\left\Vert
\left( e+\left\vert \cdot \right\vert \right) ^{-m}\right\Vert
_{p^{-}}^{p^{-}}\leq c,
\end{equation*}%
since $m>\frac{n}{p^{-}}$ (with $c>0$ independent of $h$). Hence 
\begin{equation*}
\varrho _{p(\cdot )}\Big(\gamma \delta M_{B(\cdot \text{, }2^{1-v+i})}\Big(%
\frac{\tau _{h}f}{\left\Vert f\right\Vert _{p\left( \cdot \right) }}\Big)%
\Big)\leq C\text{,}
\end{equation*}%
where $C>0$ independent of $i$ and $v$. Consequently, 
\begin{equation*}
\left\Vert \tau _{h}f\right\Vert _{p\left( \cdot \right) }\leq c\exp \big((2+2^{vnk}\left\vert h\right\vert ^{k})c_{\log }\left(
1/p\right)\big )\left\Vert f\right\Vert _{p\left( \cdot \right) }.
\end{equation*}%
The proof is complete.
\begin{remark}
	Using Lemma \ref{r-trick} we can extend Theorem  %
	\ref{key-estimate1} to the case where $p\in \mathcal{P}_{0}^{\mathrm{log}}(%
	\mathbb{R}^{n})$ with $0<p^{-}\leq p^{+}<\infty $.
\end{remark}

\textbf{Acknowledgements}. We thank the referee for carefully reading the paper and for making several useful suggestions and comments, which improved the exposition of the paper substantially.

\end{document}